\newtheorem{theorem}{Theorem}[section]
\newtheorem{lemma}[theorem]{Lemma}
\newtheorem{proposition}[theorem]{Proposition}
\newtheorem{korselt's criterion}[theorem]{Korselt's criterion}
\newtheorem{definition}[theorem]{Definition}
\newtheorem{example}[theorem]{Example}
\numberwithin{equation}{section}
\begin{document}
\title[Korselt Rational Bases and Sets ]
{Korselt Rational Bases and Sets}

\author{Nejib Ghanmi}
\address[Ghanmi]{(1)Preparatory Institute of Engineering Studies, Tunis university, Tunisia.}
\address[]{\hspace{1.5cm}(2)   University College of Jammum, Department of Mathematics, Mekkah, Saudi Arabia.}

\email{neghanmi@yahoo.fr \; and\; naghanmi@uqu.edu.sa }

\thanks{}

\subjclass[2010]{Primary $11Y16$; Secondary $11Y11$, $11A51$.}

\keywords{Prime number, Carmichael number, Square-free composite number, Korselt base, Korselt number, Korselt set}

\begin{abstract} For  a positive integer $N$ and $\mathbb{A}$ a subset of $\mathbb{Q}$, let  $\mathbb{A}$-$\mathcal{KS}(N)$ denote the set of $\alpha=\dfrac{\alpha_{1}}{\alpha_{2}}\in \mathbb{A}\setminus \{0,N\}$   verifying $\alpha_{2}p-\alpha_{1}$ divides $\alpha_{2}N-\alpha_{1}$ for every prime divisor $p$ of $N$. The set  $\mathbb{A}$-$\mathcal{KS}(N)$ is   called the set of Korselt bases of $N$ in $\mathbb{A}$ or simply the $\mathbb{A}$-Korselt set of $N$.

In this paper, we prove that for each squarefree composite number $N\in\mathbb{N}\setminus\{0,1\}$ the $\mathbb{Q}$-Korselt set of $N$ is finite where we provide an upper and lower bounds for each Korselt base of $N$ in $\mathbb{Q}$. Furthermore, we give a necessary and a sufficient condition for the upper  bound  of a Korselt base to be reached.

\end{abstract}

\maketitle

\section{Introduction}

On $1640$ Fermat wrote a letter to Fernicle,  stating his famous assertion  now-called "Fermat Little Theorem":
 \begin{theorem}[Fermat Little Theorem]
 If $p$ is prime then $p$ divides $ a^{p}-a$  for all $a\in\mathbb{N}$.

 \end{theorem}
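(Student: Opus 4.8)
The plan is to argue by induction on $a$, using the binomial theorem together with the divisibility properties of binomial coefficients modulo a prime. The base case $a=0$ is immediate, since $0^{p}-0=0$ is divisible by $p$; equivalently one may start from $a=1$, where $1^{p}-1=0$.

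For the inductive step I would assume $p\mid a^{p}-a$ and expand $(a+1)^{p}$ by the binomial theorem:
\begin{equation*}
(a+1)^{p}=\sum_{k=0}^{p}\binom{p}{k}a^{k}=a^{p}+1+\sum_{k=1}^{p-1}\binom{p}{k}a^{k}.
\end{equation*}
The crux is the auxiliary claim that $p\mid\binom{p}{k}$ for every $k$ with $1\le k\le p-1$. This follows from the identity $p!=\binom{p}{k}\,k!\,(p-k)!$: the prime $p$ divides the left-hand side, while it divides neither $k!$ nor $(p-k)!$, since every factor appearing there is a positive integer strictly smaller than $p$ and $p$ is prime. As $\binom{p}{k}$ is an integer, $p$ must therefore divide it, which is the claim.

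Granting this, every middle term $\binom{p}{k}a^{k}$ is divisible by $p$, so $(a+1)^{p}\equiv a^{p}+1\pmod{p}$, and hence
\begin{equation*}
(a+1)^{p}-(a+1)\equiv (a^{p}+1)-(a+1)=a^{p}-a\pmod{p}.
\end{equation*}
By the induction hypothesis the right-hand side is divisible by $p$, so $p\mid (a+1)^{p}-(a+1)$, completing the induction and establishing the result for all $a\in\mathbb{N}$.

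The only genuinely nontrivial point, and the step I expect to be the main obstacle, is the divisibility $p\mid\binom{p}{k}$ for $1\le k\le p-1$; everything else is a routine binomial expansion and bookkeeping. An alternative route that sidesteps this lemma is the group-theoretic one: for $a$ coprime to $p$ the map $x\mapsto ax$ permutes the nonzero residues modulo $p$, whence $a^{p-1}\equiv 1\pmod{p}$ and so $a^{p}\equiv a$, while the case $p\mid a$ is trivial. I would nonetheless favour the inductive argument here, since it is fully self-contained and presupposes no structure on the residue classes.
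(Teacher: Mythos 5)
Your proof is correct, but there is nothing in the paper to compare it against: the paper states Fermat's Little Theorem purely as historical background in the introduction (it motivates Carmichael numbers and Korselt's criterion) and offers no proof of it at all. Your inductive argument is the classical one and it is complete: the base case is trivial, the binomial expansion of $(a+1)^{p}$ is handled correctly, and the key lemma $p\mid\binom{p}{k}$ for $1\le k\le p-1$ is properly justified from $p!=\binom{p}{k}\,k!\,(p-k)!$ together with the primality of $p$ (implicitly via Euclid's lemma, which is fair to take as known). The alternative you sketch --- using that multiplication by $a$ permutes the nonzero residues modulo $p$ when $\gcd(a,p)=1$, giving $a^{p-1}\equiv 1\pmod{p}$ --- is equally valid and is the route usually attributed to Euler/Ivory; your preference for the induction is reasonable since it avoids any appeal to the structure of the unit group and covers the case $p\mid a$ uniformly. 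In short: a correct, self-contained proof of a statement the paper deliberately leaves unproved.
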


 The question arose whether the converse is true; the first answer was given by Carmichael~\cite{Car1, Car2} in $1910$ by showing  that $561$ is composite and verifies the converse of Fermat Little Theorem. Thus, all counterexamples to the converse of Fermat Little Theorem bear the name of Carmichael and defined as follows:
 \begin{definition}
A Carmichael number is a composite number $N$ such that $N$ divides $a^{N}-a$ for all $a\in\mathbb{N}$.
\end{definition}

The search of Carmichael was aided by an important criterion given  by A.Korselt~\cite{Kor} in $1899$ where these numbers are well characterized by a necessary and sufficient condition as  follows:

\begin{korselt's criterion}
A composite integer  $N>1$ is a Carmichael number if and only if $p-1$ divides $N-1$ for all prime factors  $p$ of $N$ .

\end{korselt's criterion}

This criterion simplified the study of Carmichael numbers  and helped to discover  the infinitude of Carmichael numbers in $1994$ by Alford-Granville-Pomerance~\cite{Ann}.
In the proof of the infinitude  of  Carmichael numbers, the authors asked if this proof can be generalized to produce another kind of pseudoprimes;
An  important response to this  question was given by Bouallegue-Echi-Pinch~\cite{BouEchPin}. In their work, the authors generalized the idea of Korselt by introducing the notion of  $\alpha$-Korselt numbers for $\alpha \in \mathbb{Z}$ as follows:
\begin{definition}
An $\alpha$-Korselt number is a number $N$ such that $p-\alpha$ divides $N-\alpha$ for all $p$ prime divisor of $N$.
\end{definition}
 Carmichael numbers are exactly the 1-Korselt  squarefree composite numbers. The $\alpha$-Korselt numbers for $\alpha \in \mathbb{Z}$ are well investigated last years specially in ~\cite{ BouEchPin, KorGhan, WilGhan, Ghanmi}. Motivated by these facts, Ghanmi ~\cite{Ghanmi2} introduced the  notion of $\mathbb{Q}$-Korselt numbers as extension  of the Korselt numbers to $\mathbb{Q}$ by setting the following definitions.

\begin{definition} Let $N\in \mathbb{N}\setminus\{0,1\}$,  $\alpha=\dfrac{\alpha_{1}}{\alpha_{2}}\in \mathbb{Q}\setminus \{0\}$ with $gcd(\alpha_{1}, \alpha_{2})=1$ and $\mathbb{A}$ a subset of $\mathbb{Q}$.
Then
\begin{enumerate}
  \item $N$ is said to be an \emph{$\alpha$-Korselt number\index{Korselt number}} (\emph{$K_{\alpha}$-number}, for
short), if $N\neq \alpha$ and $\alpha_{2}p-\alpha_{1}$ divides $\alpha_{2}N-\alpha_{1}$ for
every prime divisor $p$ of $N$.

\item By the \emph{$\mathbb{A}$-Korselt set}\index{Korselt set} of the number $N$ (or the Korselt set of $N$ over \emph{$\mathbb{A}$}), we mean the set $\mathbb{A}$-$\mathcal{KS}(N)$ of
all $\beta\in \mathbb{A}\setminus\{0,N\}$ such that $N$ is a $K_{\beta}$-number.
  \item The cardinality of $\mathbb{A}$-$\mathcal{KS}(N)$ will be called the \emph{$\mathbb{A}$-Korselt
weight}\index{Korselt weight} of $N$; we denote it by $\mathbb{A}$-$\mathcal{KW}(N)$.

\end{enumerate}

\end{definition}
   Further, in ~\cite{Ghanmi3} the autor  state  the following  definitions:

\begin{definition} Let $N\in \mathbb{N}\setminus\{0,1\}$, $\alpha\in \mathbb{Q}$ and $\mathbb{B}$ be a subset of $\mathbb{N}$. Then

\begin{enumerate}
\item  $\alpha$ is called \emph{$N$-Korselt base\index{Korselt base}}(\emph{$K_{N}$-base}, for
short), if $N$ is a \emph{$K_{\alpha}$-number}.
\item By the \emph{$\mathbb{B}$-Korselt set}\index{Korselt base set} of the base $\alpha$ (or the Korselt set of the base $\alpha$ over \emph{$\mathbb{B}$}), we mean the set $\mathbb{B}$-$\mathcal{KS}(B(\alpha))$ of
all $M\in \mathbb{B}$ such that $\alpha$ is a
$K_{M}$-base.
\item The cardinality of  $\mathbb{B}$-$\mathcal{KS}(B(\alpha))$ will be called the \emph{$\mathbb{B}$-Korselt
weight}\index{Korselt  weight} of the base $\alpha$; we denote it by $\mathbb{B}$-$\mathcal{KW}(B(\alpha))$.
\end{enumerate}
\end{definition}

For more convenience,  the set $\bigcup\limits_{N \in\mathbb{N}} ( \mathbb{Q} \text{-}\mathcal{KS}(N))$ is called the set of Korselt rational bases or the set of $\mathbb{N}$-Korselt bases in $\mathbb{Q}$ or  the set of  Korselt rational bases  over $\mathbb{N}$.

In this paper we are concerned only with a squarefree composite number $N$.

 Passing from $\mathbb{Z}$ to $\mathbb{Q}$, the Korselt set of a  number $N$ can vary widely, unlike other numbers for which the Korselt set remains unchanged. For instance,  if $N=2*11$ then $\mathbb{Z}$-$\mathcal{KS}(N)=\mathbb{Q}$-$\mathcal{KS}(N)=\{12\}$, but  for $N=71*73$, we have $\mathbb{Z}$-$\mathcal{KW}(N)=9$ and $ \mathbb{Q}$-$\mathcal{KW}(N)=285$. However, this does not prevent us from showing in section $2$ that  for each squarefree composite number $N$ there exist only  finitely many  $N$-Korselt rational bases. Moreover, we provide an upper and lower bounds for each $N$-Korselt rational base where we discuss the case when an upper bound is attained.

\section{Korselt rational base Properties}
 In the whole section and for $\alpha=\dfrac{\alpha_{1}}{\alpha_{2}}\in\mathbb{Q}$, we will suppose, without loss of generality, that $\alpha_{2}>0$, $\alpha_{1}\in \mathbb{Z}$ and $\gcd(\alpha_{1}, \alpha_{2})=1$.

Further,  for  $N=p_{1}p_{2}\ldots p_{m}$  with $p_{1}< p_{2}<\ldots < p_{m}$, we set

$(\mathbb{Q}_{+})$-$\mathcal{KS}(N)=\{\gamma_{1},\ldots,\gamma_{r} ; \ \ 0<\gamma_{1}<\ldots<\gamma_{r} \}$ and

  $(\mathbb{Q}_{-})$-$\mathcal{KS}(N)=\{\beta_{1},\ldots , \beta_{t}; \ \ \beta_{1}<\ldots < \beta_{t}<0\}$

  whenever $(\mathbb{Q}_{+})$-$\mathcal{KS}(N)$ and $(\mathbb{Q}_{-})$-$\mathcal{KS}(N)$ are non empty sets.

\begin{proposition}\label{carc0} Let $N$ be a squarefree composite number with prime divisors $p_{i}$, $1\leq i\leq m $. If we let

\begin{align*}
A_{i}=& \left\{ \dfrac{N+kp_{i}}{k+1}; \ \  k\in\mathbb{Z}\setminus\{-1\}\right\},
\end{align*}

for $1\leq i\leq m $, then

$$\mathbb{Q}\text{-}\mathcal{KS}(N)= \bigcap\limits_{\substack{1\leq i\leq m}}A_{i}.$$
\end{proposition}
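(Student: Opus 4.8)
The plan is to reduce the simultaneous divisibility conditions defining $\mathbb{Q}\text{-}\mathcal{KS}(N)$ to $m$ independent conditions, one per prime, and to show that the $i$-th condition is precisely membership in $A_i$. Fix $\alpha=\frac{\alpha_1}{\alpha_2}$ with $\gcd(\alpha_1,\alpha_2)=1$ and $\alpha_2>0$. By definition, $\alpha\in\mathbb{Q}\text{-}\mathcal{KS}(N)$ means $\alpha\notin\{0,N\}$ together with $\alpha_2 p_i-\alpha_1\mid\alpha_2 N-\alpha_1$ for every $i$. Since $\bigcap_i A_i$ is exactly the set of $\alpha$ lying in every $A_i$, it suffices to establish the single-prime equivalence
\[
\alpha_2 p_i-\alpha_1\mid \alpha_2 N-\alpha_1 \iff \alpha\in A_i
\]
for each fixed $i$, and then intersect over $1\le i\le m$.

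For the forward implication I would first rule out $\alpha_2 p_i-\alpha_1=0$: this would force $\alpha=p_i$, and the divisibility would then give $\alpha_2 N-\alpha_1=0$, i.e.\ $N=p_i$, contradicting that $N$ is composite. Hence there is an integer $q$ with $\alpha_2 N-\alpha_1=q(\alpha_2 p_i-\alpha_1)$. Rearranging yields $\alpha_2(N-qp_i)=\alpha_1(1-q)$; setting $k=-q$ this reads $\alpha_2(N+kp_i)=\alpha_1(k+1)$, where $k+1\neq 0$ since $q=1$ would again force $N=p_i$. Dividing gives $\alpha=\frac{N+kp_i}{k+1}$ with $k\in\mathbb{Z}\setminus\{-1\}$, that is $\alpha\in A_i$.

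For the converse, the only genuine subtlety is that $\frac{N+kp_i}{k+1}$ need not be in lowest terms, so one cannot simply set $\alpha_1=N+kp_i$ and $\alpha_2=k+1$. I would introduce the scaling factor $c=(k+1)/\alpha_2\in\mathbb{Z}\setminus\{0\}$, so that $N+kp_i=c\alpha_1$ and $k+1=c\alpha_2$, and then compute
\[
\alpha_2 p_i-\alpha_1=\frac{(k+1)p_i-(N+kp_i)}{c}=\frac{p_i-N}{c}, \qquad \alpha_2 N-\alpha_1=\frac{(k+1)N-(N+kp_i)}{c}=\frac{k(N-p_i)}{c}.
\]
Because $N\neq p_i$, the first quantity is nonzero, and the ratio of the second to the first equals $-k\in\mathbb{Z}$; hence $\alpha_2 p_i-\alpha_1\mid\alpha_2 N-\alpha_1$. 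The key observation is that this quotient $-k$ is scale-invariant, so passage to lowest terms does not interfere, which is why I expect this to be the most delicate computational step.

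Finally, intersecting the equivalence over all $i$ identifies the $\alpha$ satisfying every divisibility condition with $\bigcap_i A_i$. The remaining care concerns the two exceptional values: both $0$ and $N$ lie in every $A_i$ (take $k=-N/p_i$ and $k=0$ respectively), yet both are excluded from $\mathbb{Q}\text{-}\mathcal{KS}(N)$ by definition, so the stated identity is to be understood on $\mathbb{Q}\setminus\{0,N\}$. This boundary bookkeeping, rather than any deep ingredient, is the point at which I would be most careful when turning the sketch into a full proof.
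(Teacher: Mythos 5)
Your proof is correct and takes essentially the same route as the paper's: translate each divisibility $\alpha_{2}p_{i}-\alpha_{1}\mid \alpha_{2}N-\alpha_{1}$ into the existence of an integer quotient, use $N\neq p_{i}$ to rule out the degenerate quotients, and solve for $\alpha$ to land in $A_{i}$. You are in fact more careful than the paper, whose one-line proof skips the $\alpha=p_{i}$ case and the lowest-terms scaling in the converse, and which never acknowledges the boundary defect you correctly flag: $0$ and $N$ lie in every $A_{i}$ (take $k=-N/p_{i}$ and $k=0$ respectively), so the stated equality is literally accurate only after removing $\{0,N\}$ from the intersection.
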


\begin{proof}

Let $N\in\mathbb{N}\setminus\{0,1\}$ and $\alpha=\dfrac{\alpha_{1}}{\alpha_{2}}\in \mathbb{Q}\setminus\{0\}$.

By definition, we have $\alpha\in\mathbb{Q}$-$\mathcal{KS}(N)$ if and only if
$$\alpha_{2}p_{i}-\alpha_{1} \ \ \text{divides} \ \ \alpha_{2}N-\alpha_{1}\ \ \text{for each } i=1\ldots m. $$

 Equivalently, for each $i=1\ldots m$, there exist   $k_{i}\in\mathbb{Z}\setminus\{-1\}$   such that
 $N-\alpha=k_{i}(p_{i}-\alpha),$ namely $\alpha=\dfrac{N+(-k_{i})p_{i}}{(-k_{i})+1}\in A_{i}.$

\end{proof}

Throughout the rest of this paper and for a squarefree composite number $N$, we  set $M(k,p)=\dfrac{N+kp}{k+1}$ with $k\in\mathbb{Z}\setminus\{-1\}$ and $p$ is a prime divisor of $N$.

By Proposition~\ref{carc0}, we reprove in the following result, the property of the finitude  of $\mathbb{Q}$-$\mathcal{KS}(N)$ for each positive integer $N$.

\begin{theorem}\label{fini1}
For any given squarefree composite number $N$, there are only finitely many  $N$-Korselt rational bases.

\end{theorem}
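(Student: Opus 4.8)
The plan is to translate the Korselt divisibility conditions into a system of inequalities that simultaneously bounds both the numerator $\alpha_{1}$ and the denominator $\alpha_{2}$ of any base $\alpha=\alpha_{1}/\alpha_{2}\in\mathbb{Q}\text{-}\mathcal{KS}(N)$; once both are bounded there remain only finitely many rationals. Write $N=p_{1}\cdots p_{m}$. Since $N$ is squarefree and composite we have $m\geq 2$, and this availability of at least two distinct prime factors is exactly what makes the argument close.

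First I would record the basic divisibility. Fix a prime divisor $p_{i}$. By definition $\alpha_{2}p_{i}-\alpha_{1}$ divides $\alpha_{2}N-\alpha_{1}$, and subtracting the divisor from the dividend gives $\alpha_{2}p_{i}-\alpha_{1}\mid \alpha_{2}(N-p_{i})$. The key reduction is a coprimality observation: any common factor of $\alpha_{2}p_{i}-\alpha_{1}$ and $\alpha_{2}$ divides $\alpha_{1}$, hence divides $\gcd(\alpha_{1},\alpha_{2})=1$; thus $\gcd(\alpha_{2}p_{i}-\alpha_{1},\alpha_{2})=1$ and therefore $\alpha_{2}p_{i}-\alpha_{1}\mid N-p_{i}$. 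Because $N$ is composite we have $N-p_{i}>0$, and because $\alpha\neq N$ the integer $\alpha_{2}N-\alpha_{1}$ is nonzero, so its divisor $\alpha_{2}p_{i}-\alpha_{1}$ is nonzero as well. Hence $1\leq |\alpha_{2}p_{i}-\alpha_{1}|\leq N-p_{i}$ for every $i$.

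Second I would use two distinct primes to eliminate $\alpha_{1}$. For $i\neq j$ the triangle inequality yields $\alpha_{2}\,|p_{i}-p_{j}|=|(\alpha_{2}p_{i}-\alpha_{1})-(\alpha_{2}p_{j}-\alpha_{1})|\leq (N-p_{i})+(N-p_{j})$, so $\alpha_{2}\leq (2N-p_{i}-p_{j})/|p_{i}-p_{j}|$. This bounds $\alpha_{2}$ by a constant depending only on $N$, so $\alpha_{2}$ takes only finitely many values; and for each admissible $\alpha_{2}$ the inequality $|\alpha_{2}p_{1}-\alpha_{1}|\leq N-p_{1}$ confines $\alpha_{1}$ to a finite interval. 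Consequently only finitely many pairs $(\alpha_{1},\alpha_{2})$ can occur, and $\mathbb{Q}\text{-}\mathcal{KS}(N)$ is finite.

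I expect the only delicate point to be the coprimality step that upgrades $\alpha_{2}p_{i}-\alpha_{1}\mid \alpha_{2}(N-p_{i})$ to $\alpha_{2}p_{i}-\alpha_{1}\mid N-p_{i}$; everything after it is routine estimation. As an alternative consistent with the stated approach, one could argue directly from Proposition~\ref{carc0}: rewriting $M(k,p_{i})=p_{i}+(N-p_{i})/(k+1)$ shows that each $A_{i}$ is contained in the bounded interval $[\,2p_{i}-N,\;N\,]$ and accumulates only at $p_{i}$, so an infinite intersection $\bigcap_{i}A_{i}$ would force a common accumulation point equal to every (distinct) $p_{i}$, which is impossible; hence the intersection, and thus $\mathbb{Q}\text{-}\mathcal{KS}(N)$, is finite.
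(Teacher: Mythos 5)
Your proof is correct, and it takes a genuinely different route from the paper's. The paper argues through Proposition~\ref{carc0}: writing $\alpha=M(k_i,p_i)=M(k_j,p_j)$ for two distinct prime divisors, it obtains the identity $p_j-p_i=\frac{N-p_i}{k_i+1}-\frac{N-p_j}{k_j+1}$ and concludes that the parameters $k_i,k_j$ must be bounded because the left side is a fixed nonzero number while the right side tends to $0$; the resulting bound $B$ is left inexplicit. You instead work directly with the integers $\alpha_2p_i-\alpha_1$: your coprimality observation $\gcd(\alpha_2p_i-\alpha_1,\alpha_2)=1$ upgrades $\alpha_2p_i-\alpha_1\mid\alpha_2(N-p_i)$ to the stronger structural fact $\alpha_2p_i-\alpha_1\mid N-p_i$ (all steps check out, including the remark that $\alpha_2p_i-\alpha_1\neq 0$ because it divides the nonzero integer $\alpha_2N-\alpha_1$), and then two distinct primes plus the triangle inequality bound the denominator, after which the numerator is confined. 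This buys something the paper's proof does not give: completely explicit bounds, namely $\alpha_2\leq 2N-p_1-p_2$ and $|\alpha_1|\leq \alpha_2p_1+N-p_1$, hence an effective, computable upper bound on the $\mathbb{Q}$-Korselt weight of $N$, whereas the paper's compactness-style argument only yields finiteness. What the paper's route buys in exchange is that it stays entirely inside the parametrization $M(k,p)$ on which the rest of the paper (Proposition~\ref{carc1}, Lemma~\ref{incdec}, Theorems~\ref{encad} and~\ref{attein}) is built. Your closing alternative sketch --- each $A_i$ is bounded with unique accumulation point $p_i$, so an infinite intersection would force a common accumulation point equal to every $p_i$ --- is essentially the paper's own limit argument made rigorous via Bolzano--Weierstrass, and is arguably a cleaner way to phrase it.
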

\begin{proof}
We show that there exists a positive integer $k_{0}$ such that for all

 $k\in\mathbb{Z}\setminus\{-1\}$ and  $\mid k \mid>k_{0}$, we have $M(k,p)\notin \mathbb{Q}$-$\mathcal{KS}(N).$

Let $1\leq i\neq j \leq m$. If $\alpha\in \mathbb{Q}$-$\mathcal{KS}(N)$, then there exist integers $k_{i}$ and $k_{j}$  such that $\alpha=M(k_{i},p_{i})=M(k_{j},p_{j})$. Since $M(k_{i},p_{i})=\dfrac{N-p_{i}}{k_{i}+1}+p_{i}$, it follows that

 \begin{equation}\label{eq1}
  p_{j}-p_{i}=\dfrac{N-p_{i}}{k_{i}+1}-\dfrac{N-p_{j}}{k_{j}+1}
  \end{equation}
However, since $p_{j}-p_{i}$ is a fixed nonzero number and the limit of $ \dfrac{N-p_{i}}{k_{i}+1}-\dfrac{N-p_{j}}{k_{j}+1}$ as $k_{i}$ or $k_{j}$ approaches infinity is $0$, it follows by $\eqref{eq1}$, that the integers  $k_{i}$ and $k_{j}$ must be bounded; so that, there  exists a fixed positive integer $B_{(i,j)}$ such that $\max(\mid k_{i}\mid,\mid k_{j}\mid)<B_{(i,j)}$. Setting $B=\max_{1\leq i,j\leq m}B_{(i,j)}$, we get
$$\alpha\in\bigcap\limits_{\substack{p \, \text{prime}\\
                                  p\mid N}}
 \left\{ \dfrac{N+kp}{k+1}; \ \  \mid k\mid <B \right\},$$
it follows, by Proposition~\ref{carc0}, that  $$\mathbb{Q}\text{-}\mathcal{KS}(N)=\bigcap\limits_{\substack{p \, \text{prime}\\
                                  p\mid N}}
\left\{ \dfrac{N+kp}{k+1}; \ \  \mid k\mid <B \right\}.$$

Thus, $\mathbb{Q}\text{-}\mathcal{KS}(N)$ is finite.
\end{proof}

\bigskip
With a simple Maple program, we provide  data in Table $(1)$ ( resp. Table $(2)$) representing the rational Korselt set of  the five smallest squarefree composite numbers $N$  with two ( resp. three ) prime factors.

\begin{center}

\medskip
{\small
\setcellgapes{4pt}
\begin{tabular}{||l|l|l||l|l|l|l|l|} \hline

$N$ &$\mathbb{Q}$-$\mathcal{KS}(N)$ \\ \hline

 $2*3$& $\{4,\dfrac{3}{2} ,\dfrac{9}{4} , \dfrac{12}{5},\dfrac{18}{7},\dfrac{10}{3},\dfrac{5}{2},\dfrac{8}{3},\dfrac{14}{5}\}$ \\ \hline

 $2*5$&$\{4, 6,\dfrac{10}{3} ,\dfrac{5}{2},\dfrac{14}{3}\}$ \\ \hline

$2*7$& $\{8, 6, \dfrac{7}{2}\}$ \\ \hline

 $3*5$& $\{4,6,7,\dfrac{5}{2} ,\dfrac{10}{3} , \dfrac{25}{7},\dfrac{15}{4},\dfrac{45}{11},\dfrac{13}{3},\dfrac{9}{2},\dfrac{33}{7},\dfrac{27}{5},\dfrac{5}{3}\}$ \\ \hline

 $3*7$&$\{5, 6,9,\dfrac{15}{2} ,\dfrac{7}{3},\dfrac{7}{2},\dfrac{21}{5},\dfrac{21}{4},\dfrac{33}{5} \}$\\ \hline
\end{tabular}

\bigskip

\textbf{Table $(1)$.}  \emph{$\mathbb{Q}$-$\mathcal{KS}(N)$ for $N$ a squarefree composite number  with two prime factors}.
}\end{center}

\vspace{1cm}

\begin{center}

\medskip
{\small
\setcellgapes{4pt}
\begin{tabular}{||l|l|l||l|l|l|l|l|} \hline

$N$ &$\mathbb{Q}$-$\mathcal{KS}(N)$ \\ \hline

 $2*3*5$& $\{4, 6,\dfrac{15}{8},\dfrac{40}{13},\dfrac{5}{2},\dfrac{10}{3},\dfrac{15}{4},\dfrac{24}{5}\}$ \\ \hline
 $2*3*7$&$\{6,42,\dfrac{28}{9},\dfrac{9}{2},\dfrac{21}{8}\}$\\ \hline
$2*5*7$&$\{4,6,\dfrac{5}{2},\dfrac{7}{4},\dfrac{56}{11},\dfrac{25}{4},\dfrac{48}{7}\}$ \\ \hline
 $3*5*7$& $\{6,9,105,\dfrac{126}{25} ,\dfrac{35}{6} , \dfrac{90}{13},\dfrac{21}{5},\dfrac{35}{12}\}$\\ \hline
 $2*3*11$&$\{6,10,66,\dfrac{22}{7},\dfrac{11}{6}\}$\\ \hline
\end{tabular}

\bigskip

\textbf{Table $(2)$.} \emph{$\mathbb{Q}$-$\mathcal{KS}(N)$ for $N$ a squarefree composite number  with three prime factors}.
}\end{center}

\vspace{1cm}

The next result gives  bounds for all Korselt rational bases of a number $N$.
\begin{proposition}\label{carc1}
 The following assertions hold.
\begin{enumerate}
  \item $\gamma_{i}\leq M(j+r-i,p_{j})$ for each $(i,j)\in \{1\ldots r\}\times\{1\ldots m\}$.
  \item  $M(j-m-s-2,p_{j})\leq\beta_{s}$ for each $(s,j)\in \{1\ldots t\}\times\{1\ldots m\}$.

\end{enumerate}

\end{proposition}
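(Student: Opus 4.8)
The plan is to recast Proposition~\ref{carc0} in terms of the integer exponents it produces. Since $M(k,p_l)=p_l+\frac{N-p_l}{k+1}$ is strictly decreasing in $k$ on each of the intervals $(-\infty,-1)$ and $(-1,+\infty)$, for a fixed base $\alpha\in\mathbb{Q}$-$\mathcal{KS}(N)$ and a fixed prime $p_l$ there is a \emph{unique} integer, which I denote $k_l(\alpha)$, with $\alpha=M(k_l(\alpha),p_l)$; it satisfies $k_l(\alpha)+1=\frac{N-p_l}{\alpha-p_l}$. Two monotonicity facts drive everything. Fixing the prime: if $\alpha<\alpha'$ are bases lying on the same side of $p_l$, then $k_l(\alpha)>k_l(\alpha')$. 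Fixing the base: since $\frac{d}{dp}\frac{N-p}{\alpha-p}=\frac{N-\alpha}{(\alpha-p)^2}>0$ (recall every base is $<N$), the quantity $k_l(\alpha)+1$ is strictly increasing in $p_l$ over the primes lying on one side of $\alpha$. Finally, $\alpha>p_l$ forces $k_l(\alpha)\ge 1$, and $\alpha<0<p_l$ forces $k_l(\alpha)\le -2$.

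For (1), fix $(i,j)$. If $\gamma_i<p_j$ the inequality is immediate, as $M(j+r-i,p_j)>p_j>\gamma_i$ (here $j+r-i\ge 1$). So assume $\gamma_i>p_j$; then $\gamma_r\ge\gamma_i>p_j>\cdots>p_1$, and the fixed-base fact applied to $\gamma_r$ yields $1\le k_1(\gamma_r)<k_2(\gamma_r)<\cdots<k_j(\gamma_r)$, whence $k_j(\gamma_r)\ge j$. Next, $\gamma_i<\gamma_{i+1}<\cdots<\gamma_r$ are all $>p_j$, so the fixed-prime fact in the ``column'' $p_j$ gives strictly decreasing integers $k_j(\gamma_i)>k_j(\gamma_{i+1})>\cdots>k_j(\gamma_r)\ge j$; counting the $r-i+1$ terms gives $k_j(\gamma_i)\ge j+r-i$. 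Applying the decreasing map $M(\cdot,p_j)$ gives $\gamma_i=M(k_j(\gamma_i),p_j)\le M(j+r-i,p_j)$.

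For (2), fix $(s,j)$; now every $\beta_s<0$, so $k_l(\beta_s)\le -2$ for all $l$. Combining the two monotonicities produces one strictly decreasing chain of integers
$$k_m(\beta_1)>k_m(\beta_2)>\cdots>k_m(\beta_s)>k_{m-1}(\beta_s)>\cdots>k_j(\beta_s),$$
the first $s$ terms coming from the fixed-prime fact in column $p_m$ and the remaining $m-j$ terms from the fixed-base fact along the row of $\beta_s$. This chain has $s+(m-j)$ terms, so $k_j(\beta_s)\le k_m(\beta_1)-(s+m-j-1)$. Since $M(j-m-s-2,p_j)\le\beta_s$ is equivalent, via the decreasing map $M(\cdot,p_j)$ on $(-\infty,-1)$, to $k_j(\beta_s)\le j-m-s-2$, it suffices to prove $k_m(\beta_1)\le -3$.

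I expect this last inequality to be the main obstacle. The ``free'' bound is only $k_m(\beta_1)\le -2$, which would give the weaker estimate $k_j(\beta_s)\le j-m-s-1$; the extra unit must come from ruling out $k_m(\beta_1)=-2$, i.e.\ from showing $M(-2,p_m)=2p_m-N$ is never an element of $\mathbb{Q}$-$\mathcal{KS}(N)$ (then $\beta_1>2p_m-N$, forcing $k_m(\beta_1)\le -3$). I would prove this by a size-versus-divisibility argument against the smallest prime: membership $2p_m-N\in A_1$ forces $(N+p_1-2p_m)\mid(N-p_1)$, equivalently $(N+p_1-2p_m)\mid 2(p_m-p_1)$. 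For $m\ge 3$ one has $N\ge 6p_m$, so the positive divisor $N+p_1-2p_m$ exceeds the positive number $2(p_m-p_1)$ and divisibility is impossible; for $m=2$, the subcase $p_1=2$ gives the excluded value $2p_m-N=0$, while for $p_1\ge 3$ one checks directly that $k_1(2p_m-N)+1=\frac{p_1(p_m-1)}{-(N+p_1-2p_m)}$ is never an integer (its modulus lies strictly between $1$ and $2$ when $p_1\ge 5$, and equals $3-\frac{12}{p_m+3}$ when $p_1=3$). With $k_m(\beta_1)\le -3$ established, the chain estimate closes (2).
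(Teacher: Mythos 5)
Your proof is correct, and its skeleton coincides with the paper's own: your integers $k_l(\alpha)$ are exactly the paper's quotients (its $f_{(i,n)}$ in the positive case, and $-k_{(s,j)}$ in the negative case), your two monotonicity facts are the ones the paper exploits, and your chain counts $k_j(\gamma_i)\ge j+r-i$ and $k_j(\beta_s)\le k_m(\beta_1)-(s+m-j-1)$ reproduce the paper's chains term for term (traversed in a different order, which incidentally makes every inequality strict and avoids a small sloppiness in the paper's count), including the reduction of part (2) to the single exclusion $\beta_1\neq 2p_m-N$, i.e.\ the paper's claim $k_{(1,m)}\ge 3$. The only genuine divergence is how that exclusion is proved: the paper writes $N=N_1p_m$ with $N_1\ge 3$, picks a prime $p_n\mid N_1$, uses $\gcd(p_m,\,p_n+(N_1-2)p_m)=1$ to reduce to $p_n+(N_1-2)p_m\mid 2(N_1-1)$, and gets one uniform size contradiction, whereas you test divisibility against the smallest prime $p_1$ and split into $m\ge 3$ (size), $(m,p_1)=(2,2)$ (the candidate value is the excluded $0$), and $m=2,\,p_1\ge 3$ (non-integrality of $k_1(2p_m-N)+1$); both are valid size-versus-divisibility arguments, the paper's being more uniform, yours more elementary. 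Two details you should make explicit in a final write-up: the parenthetical ``every base is $<N$'' needs its one-line justification (if $\alpha>N$ then $k_m(\alpha)+1=\frac{N-p_m}{\alpha-p_m}$ lies in $(0,1)$, contradicting integrality), and in the case $p_1=3$ you should finish the non-integrality check by noting that $3-\frac{12}{p_2+3}\in\mathbb{Z}$ would force $p_2+3\mid 12$, hence $p_2=9$, which is not prime.
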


\begin{proof}

1) First, Suppose that $N<\gamma_{r} $. Then $0<\gamma_{r}-N<\gamma_{r}-p_{m}$, and consequently $0<\lvert k_{m}\rvert=\dfrac{\gamma_{r}-N}{\gamma_{r}-p_{m}}<1$, contradicting $k_{m}\in \mathbb{Z}$.

 So, we deduce that  $\gamma_{1}<\gamma_{2}<\ldots<\gamma_{r} \leq N-1$.

Now, for given pair  $(i,j)\in \{1\ldots r\}\times\{1\ldots m\}$, let us prove that  $$\gamma_{i}\leq  \dfrac{N+(j+r-i)p_{j}}{j+r-i+1}.$$ Two cases are to be considered.
\begin{itemize}

\item If $\gamma_{i} \leq p_{j}$,  this is clear.

\item Assume that $p_{j}< \gamma_{i} < N$. Since $\gamma_{i}\in(\mathbb{Q_{+}})$-$\mathcal{KS}(N)$, then for each  $n=1\ldots j$, there exists an integer $f_{(i,n)} \in \mathbb{N}$ such that $N-\gamma_{i} =f_{(i,n)}(\gamma_{i}-p_{n})$.

Since, in addition, $( \gamma_{i}-p_{n})_{1\leq n\leq j}$ is a decreasing sequence, it follows that $( f_{(i,n)})_{1\leq n\leq j}$ is an increasing sequence.

 On the other hand, let $g_{(i,n)}=\dfrac{N-p_{n}}{\gamma_{i}-p_{n}}=f_{(i,n)}+1$.

 Since  $( \gamma_{i}-p_{n})_{1\leq i\leq r}$ is an increasing sequence, it follows that
 $(g_{(i,n)})_{1\leq i\leq r}$ and  $(f_{(i,n)})_{1\leq i\leq r}$ are two decreasing sequences.
  Consequently, as $f_{(r,1)}\geq 1$, we may write $$f_{(i,j)}> f_{(i,j-1)}>\ldots> f_{(i,1)} \geq f_{(i+1,1)}>\ldots> f_{(r,1)}\geq 1.$$

   Hence $\dfrac{N-\gamma_{i}}{\gamma_{i}-p_{j}}=f_{(i,j)}\geq j+r-i,$ which implies that

  $$\gamma_{i}\leq \dfrac{N+(j+r-i)p_{j}}{j+r-i+1}=M(j+r-i,p_{j}).$$
\end{itemize}

2) Of course, for each $(s,j)\in \{1\ldots t\}\times\{1\ldots m\}$, there exists  an integer $k_{(s,j)} \in \mathbb{N}$ such that $k_{(s,j)}=\dfrac{N-\beta_{s}}{p_{j}-\beta_{s}}$.

We claim that $k_{(1,m)}\geq 3$. Indeed, since $k_{(1,m)}=\dfrac{N-p_{m}}{p_{m}-\beta_{1}}+1$ and  $N>p_{m}$, it follows that $ k_{(1,m)}> 1$.

Next, we show that $k_{(1,m)}\neq2$. Suppose by contradiction that $k_{(1,m)}=2$. Then
$\beta_{1}= 2p_{m}-N\in\mathbb{Z}$, but as $\beta_{1}\neq p_{m}$ and $\beta_{1}\neq0$, we get  $N\neq
    p_{m}$ and $N\neq 2p_{m}$. So, there exists an integer $N_{1}\geq3$ such that  $N=N_{1}p_{m}$.

Let $p_{n}$ be a prime factor of $N_{1}$, then $$p_{n}-\beta_{1}= p_{n}+(N_{1}-2)p_{m} \mid N-\beta_{1}=2p_{m}(N_{1}-1).$$

As in addition $\gcd(p_{m},p_{n}+(N_{1}-2) p_{m} )=1$, it follows that
 $$p_{n}-\beta_{1}= p_{n}+(N_{1}-2)p_{m} \mid 2(N_{1}-1).$$

 Hence, $p_{n}+(N_{1}-2)p_{m} \leq 2(N_{1}-1)$. Since $4\leq p_{n}+2\leq p_{m}$, we get
 $$ 2+4(N_{1}-2)\leq p_{n}+(N_{1}-2)p_{m}\leq2(N_{1}-1).$$

 Thus $N_{1}\leq2$, which contradicts the fact that $N_{1}\geq3$. Consequently,  we conclude that $k_{(1,m)}\geq 3$.

 Now, on one hand, as $(p_{j}-\beta_{s})_{1\leq j\leq m}$ is an increasing sequence,  $(k_{(s,j)})_{1\leq j\leq m}$ is a decreasing sequence.

  On the other hand, setting $l_{(s,j)}=\dfrac{N-p_{j}}{p_{j}-\beta_{s}}=k_{(s,j)}-1$, it follows, since $(p_{j}-\beta_{s})_{1\leq s\leq t}$ is a decreasing sequence, that $(l_{(s,j)})_{1\leq s\leq t}$ and  $(k_{(s,j)})_{1\leq s\leq t}$ are two increasing sequences.  Knowing that $k_{(1,m)}\geq 3$, it follows that    $$k_{(s,j)}> k_{(s,j+1)}>\ldots> k_{(s,m)} > k_{(s-1,m)}>\ldots> k_{(1,m)}\geq 3.$$
   Hence $\dfrac{N-\beta_{s}}{p_{j}-\beta_{s}}=k_{(s,j)}\geq m+s-j+2$, which yields
   $$\beta_{s}\geq\dfrac{(m+s-j+2)p_{j}-N}{m+s-j+1}=M(j-m-s-2,p_{j}).$$

\end{proof}

Now, to prove the main result given by Theorem~\ref{encad}, we need to establish the following lemma.
\begin{lemma}\label{incdec}

The following assertions hold.
\begin{enumerate}
  \item $(M(2j,p_{2j}))_{j}$ and $(M(2j+1,p_{2j+1}))_{j}$ are two decreasing

  sequences.
  \item $(M(2j-m-3,p_{2j}))_{j}$ and $(M(2j-m-2,p_{2j+1}))_{j}$ are two

  decreasing sequences.

\end{enumerate}

\end{lemma}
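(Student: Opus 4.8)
The plan is to collapse all four monotonicity claims into a single difference estimate. Writing
$$M(k,p)=p+\frac{N-p}{k+1},$$
I observe that each of the four sequences, read at consecutive indices, has the shape ``$M(k,p)$ versus $M(k+2,q)$'', where $p=p_i<q=p_{i+2}$ are prime factors of $N$ and the parameter increases by $2$: in part (1) the parameter equals the prime index ($k=2j$, resp. $k=2j+1$), while in part (2) the parameter is the prime index minus $m+3$ (namely $k=2j-m-3$ with prime $p_{2j}$, resp. $k=2j-m-2$ with prime $p_{2j+1}$). Thus it suffices to prove that, for prime factors $p<q$ of $N$ and the relevant values of $k$, one has $M(k,p)>M(k+2,q)$.

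First I would establish the identity
$$M(k,p)-M(k+2,q)=\frac{2N+(p-q)\,k(k+3)-2q}{(k+1)(k+3)},$$
obtained by placing the two fractions over the common denominator $(k+1)(k+3)$ and simplifying with $(k+1)(k+2)=k(k+3)+2$. Next I would settle the sign of the denominator over each range. In part (1) we have $k\geq 1$, so $(k+1)(k+3)>0$. In part (2), every term possessing a successor forces the successor's prime index to be $\leq m$, which gives $k\leq -5$, hence $k+3\leq -2$ and again $(k+1)(k+3)>0$; moreover the degenerate values $k+1=0$ and $k+3=0$ never occur for a term that has a successor, so no term is undefined. Consequently, in all cases the desired inequality $M(k,p)>M(k+2,q)$ is equivalent to positivity of the numerator, that is, to
$$2N>(q-p)\,k(k+3)+2q.$$

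The crux, and the only genuine obstacle, is this last inequality: it records the competition between the increasing parameter $k$, which shrinks $\frac{N-p}{k+1}$, and the increasing prime $q$, which enlarges $M$, and it is won precisely because $N=p_1\cdots p_m$ is the product of all the prime factors. To finish I would bound the right-hand side using $q\leq p_m$, $q-p<q$, and $k(k+3)\leq (m+2)(m-1)=m^2+m-2$ (the extreme case being $k=-m-2$, the first term of the second sequence in part (2)); after dividing by $p_m$ it then suffices to prove
$$2\prod_{i=1}^{m-1}p_i>m^2+m.$$
This is a primorial dominating a quadratic, which for $m\geq 4$ follows by a one-line induction reflecting the rapid growth of the primorial (the base $m=4$ already reads $60>20$). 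The boundary case $m=3$, where the crude bound is tight, I would dispatch directly from $2N=2p_1p_2p_3\geq 12p_3$, which beats the right-hand side of $2N>(q-p)k(k+3)+2q$ for both the surviving sequences there; and $m=2$ is vacuous, since then each of the four sequences has a single term. I expect the bookkeeping of the four index ranges, to guarantee $(k+1)(k+3)>0$ and the absence of undefined terms, together with the tightness of the estimate at $m=3$, to be the most delicate parts, the rest being the routine algebra above.
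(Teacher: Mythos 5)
Your argument is correct, and its core coincides with the paper's: compare consecutive terms by putting $M(k,p)-M(k+2,q)$ over the common denominator $(k+1)(k+3)$, check that this denominator is positive on the relevant index ranges, and prove the numerator positive. Indeed, your numerator $2N+(p-q)k(k+3)-2q$ is literally the paper's $2N+j(j+3)p_{j}-(j+1)(j+2)p_{j+2}$ in part (1) (take $k=j$), and its $\Gamma_j$-numerator in part (2) (take $k=j-m-3$). Where you genuinely diverge is in how the positivity is closed. The paper treats the parts separately: part (1) via $N\geq p_jp_{j+1}p_{j+2}\geq(j+1)(j+2)p_{j+2}$ (using $p_k\geq k+1$), and part (2) via a further case split $j\geq 2$ versus $j=1$, each settled by a tailored product-of-three-primes bound ($p_{m-2}p_{m-1}p_m\leq N$, resp.\ $N\geq (m-1)mp_3$). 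You instead collapse all four sequences into the single inequality $2N>(q-p)k(k+3)+2q$, take the uniform worst case $k(k+3)\leq(m+2)(m-1)$ attained at $k=-m-2$, and reduce everything to the primorial-versus-quadratic bound $2p_1\cdots p_{m-1}>m^2+m$, proved by induction for $m\geq4$ and checked directly at $m=3$. Incidentally, your separate $m=3$ patch is avoidable: since your intermediate estimate $(q-p)k(k+3)+2q<q\bigl(k(k+3)+2\bigr)$ is already strict, the non-strict primorial bound suffices, and at $m=3$ it holds with equality ($2p_1p_2\geq 12=m^2+m$). Your route buys uniformity — one identity, one sign check, one final inequality, no internal case analysis — and it pinpoints the binding case as the most negative parameter $k=-m-2$; the paper's route avoids any induction, each case being finished at once by comparison with a product of three prime factors. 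Both rest on the same two ingredients: the algebra of $M(k,p)$ and the bound $p_k\geq k+1$.
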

\begin{proof}

First, as the result is immediate for $m=2$, we may suppose that $m\geq3$.

\begin{enumerate}
\item For $j\geq 1$, let

\medskip

\begin{tabular}{lll}
$\Delta_{j}$&=&$M(j,p_{j})-M(j+2,p_{j+2})$\\
&&\\

&=&$\dfrac{2N+j(j+3)p_{j}-(j+1)((j+2)p_{j+2}}{(j+3)(j+1)}$\\
&&\\
\end{tabular}

As $j+2\geq3$ and $p_{k}\geq k+1$ for each $k\geq1$, we get $$N\geq p_{j}p_{j+1}p_{j+2}\geq (j+1)(j+2)p_{j+2}.$$
Therefore
$\Delta_{j}\geq\dfrac{ N+j(j+3)p_{j}}{(j+3)(j+1)}>0$,  which implies that  $(M(2j,p_{j}))_{j}$ and $(M(2j+1,p_{2j+1}))_{j}$ are two decreasing sequences.

\bigskip

\item  For $j\leq m-2$, let

\medskip

\begin{tabular}{lll}
$\Gamma_{j}$&=&$M(j-m-3,p_{j})-M(j-m-1,p_{j+2})$\\
   &&\\
&=&$\dfrac{2N+(m-j)(m-j+3)p_{j}-(m-j+2)(m-j+1)p_{j+2}}{(m-j+2)(m-j)}$.\\
&&\\
\end{tabular}
For simplicity, let $a_{k}=m-j+k$. Then we can  write \begin{equation}\label{eq2}\Gamma_{j}=\dfrac{2N+a_{0}a_{3}p_{j}-a_{2}a_{1}p_{j+2}}{a_{0}a_{2}}.\end{equation}
Two cases are to be discussed.

\begin{itemize}

\item Suppose  that  $j\geq2$. Then $m\geq4$, hence $p_{j+2}\leq p_{m}$, $m+1-j\leq m-1\leq p_{m-2}$ and  $m+2-j\leq m \leq p_{m-1}$. This implies that
$$\begin{array}{lll}
a_{2}a_{1}p_{j+2}&=&(m+2-j)(m+1-j)p_{j+2}\\
&&\\
   &\leq &p_{m-1}p_{m-2}p_{m}\leq N. \\

   \end{array}$$

Therefore by $\eqref{eq2}$, we obtain  $$\Gamma_{j}> \dfrac{N+a_{0}a_{3}p_{j}}{a_{0}a_{2}}>0.$$

\item Now, assume that $j=1$. Then $m\geq 3$, and  we can write  $$N\geq p_{m-2}p_{m-1}p_{m}\geq (m-1)mp_{3}.$$

 Hence, by $\eqref{eq2}$ we get

$\begin{array}{lll}
&&\\
\Gamma_{j}&\geq&\dfrac{2(m-1)mp_{3}+(m-1)(m+2)p_{1}-m(m+1)p_{3}}{m^{2}-1}\\
&> &\dfrac{m(m-3)p_{3}}{m^{2}-1}\geq0.\\
&&
\end{array}$

\end{itemize}

So, we conclude that  $\Gamma_{j}>0$ for each $j=1\ldots m-2$.
Consequently, $(M(2j-m-3,p_{2j}))_{j}$ and $(M(2j-m-2,p_{2j+1}))_{j}$ are two decreasing sequences.
\end{enumerate}
\end{proof}

 By Proposition~\ref{carc1} and Lemma~\ref{incdec}, we get the following result which provides us with some information about the $\mathbb{Q}$-Korselt set of $N$.

\begin{theorem}\label{encad}

If $\alpha$ is an  $N$-Korselt rational base, then

$$M(-m-2,p_{1})\leq \alpha\leq \min(M(m-1,p_{m-1}),M(m,p_{m}))$$

\end{theorem}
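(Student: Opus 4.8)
The plan is to split $\mathbb{Q}$-$\mathcal{KS}(N)$ into its positive part $\{\gamma_{1}<\cdots<\gamma_{r}\}$ and its negative part $\{\beta_{1}<\cdots<\beta_{t}\}$, and to reduce the two-sided estimate for an arbitrary base to controlling only the two extreme elements $\gamma_{r}$ and $\beta_{1}$. For the upper bound it suffices to bound $\gamma_{r}$: every positive base is $\le\gamma_{r}$, while every negative base lies trivially below the \emph{positive} quantity $\min(M(m-1,p_{m-1}),M(m,p_{m}))$. For the lower bound it suffices to bound $\beta_{1}$: every negative base is $\ge\beta_{1}$, while the positive bases will be handled by comparing with the sign of $M(-m-2,p_{1})$.

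For the upper bound I would invoke Proposition~\ref{carc1}(1) with $i=r$, which gives $\gamma_{r}\le M(j,p_{j})$ for every $j\in\{1,\ldots,m\}$, hence $\gamma_{r}\le\min_{1\le j\le m}M(j,p_{j})$. The role of Lemma~\ref{incdec}(1) is precisely to evaluate this minimum: since $(M(2j,p_{2j}))_{j}$ and $(M(2j+1,p_{2j+1}))_{j}$ are both decreasing, the quantity $M(j,p_{j})$ decreases separately along even and along odd indices, so its minimum over $\{1,\ldots,m\}$ is attained at the largest index of each parity, namely $j=m-1$ and $j=m$. This yields $\gamma_{r}\le\min(M(m-1,p_{m-1}),M(m,p_{m}))$, and the upper bound follows for all bases.

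For the lower bound I would argue symmetrically with Proposition~\ref{carc1}(2) taken at $s=1$, giving $\beta_{1}\ge M(j-m-3,p_{j})$ for every $j$; the choice $j=1$ produces $\beta_{1}\ge M(-m-2,p_{1})$ at once, while Lemma~\ref{incdec}(2) is the exact mirror of part (1) and shows the sharpest lower bound of this form sits at the smallest indices $j\in\{1,2\}$, with $M(-m-2,p_{1})$ among them. For the positive bases one then checks that $M(-m-2,p_{1})=\frac{(m+2)p_{1}-N}{m+1}\le 0$ as soon as $N\ge(m+2)p_{1}$, i.e.\ $p_{2}\cdots p_{m}\ge m+2$, which holds for every squarefree composite $N$ except $N=6$; thus $\gamma_{i}>0\ge M(-m-2,p_{1})$ in all those cases, and the single remaining instance $N=6$ (whose smallest base is $\tfrac{3}{2}>\tfrac{2}{3}=M(-4,2)$) is dispatched by inspection.

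The main obstacle here is not any individual estimate but the bookkeeping. One must correctly pair the even/odd subsequences of Lemma~\ref{incdec} with the extremal indices $\{m-1,m\}$ (taking the maximum of the two parity tails to locate the minimum of $M(j,p_{j})$) and $\{1,2\}$ (taking the minimum of the two parity heads to locate the maximum of $M(j-m-3,p_{j})$). Equally, one must keep in mind that Proposition~\ref{carc1} only controls positive bases from above and negative bases from below, so the two ``cross'' sign comparisons — a negative base against the positive upper bound, and a positive base against the lower bound $M(-m-2,p_{1})$ — have to be verified separately, the latter being the only place where the genuinely special case $N=6$ intervenes.
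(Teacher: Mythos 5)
Your proposal is correct, and it follows the same skeleton as the paper's proof — the split of $\mathbb{Q}$-$\mathcal{KS}(N)$ into positive and negative parts, Proposition~\ref{carc1} applied at the extremes $i=r$ and $s=1$, and Lemma~\ref{incdec}(1) to evaluate $\min_{1\leq j\leq m} M(j,p_{j})=\min(M(m-1,p_{m-1}),M(m,p_{m}))$ — but it deviates in two respects, both improvements. First, for the lower bound you take $(s,j)=(1,1)$ in Proposition~\ref{carc1}(2) and obtain $\beta_{1}\geq M(-m-2,p_{1})$ in one line; the paper instead invokes Lemma~\ref{incdec}(2) and then proves $M(-m-2,p_{1})>M(-m-1,p_{2})$ by a separate case analysis (the quantity $\Theta$ in \eqref{eq3}). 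That extra work is needed only to show that $M(-m-2,p_{1})$ is the sharpest bound in the family $\{M(j-m-3,p_{j})\}_{j}$, not to prove the stated inequality, so your shortcut loses nothing. Second, and more substantively, you verify the two cross-sign comparisons that the paper's closing sentence (``combining the two cases'') leaves implicit: a negative base trivially sits below the positive upper bound, while a positive base could in principle violate the lower bound when $M(-m-2,p_{1})=\frac{(m+2)p_{1}-N}{m+1}>0$, which happens exactly when $p_{2}\cdots p_{m}<m+2$, i.e., only for $N=6$; you dispose of that case by noting its smallest Korselt base $\tfrac{3}{2}$ exceeds $M(-4,2)=\tfrac{2}{3}$. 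Since the paper's Case 1 proves only the upper bound for $\alpha>0$ and Case 2 only the lower bound for $\alpha<0$, your treatment closes a genuine (if small) gap in the published argument; the only thing the paper's longer route buys in exchange is the extra information that $M(-m-2,p_{1})$ is optimal among the lower bounds of this form.
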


\begin{proof} Let $\alpha \in\mathbb{Q}$-$\mathcal{KS}(N)$. Two case are to be considered.
\begin{enumerate}
  \item [\textbf{Case $1$}]: If  $\alpha>0$, then as by Proposition ~\ref{carc1}(1), we have $\gamma_{r}\leq M(j,p_{j})$ for each $ j=1\ldots m$, it follows  by Lemma ~\ref{incdec}, that $$\alpha\leq\gamma_{r}\leq \min_{1\leq j\leq m} M(j,p_{j})=\min\{ M(m-1,p_{m-1}),M(m,p_{m})\}.$$

 \item [\textbf{Case $2$}]:  Now, suppose that $\alpha<0$. Then  by Proposition ~\ref{carc1}(2), we get $M(j-m-3,p_{j})\leq\beta_{1}$ for each  $j=1\ldots m$. This implies, by Lemma $2.4$, that

$$\begin{array}{rll}
\max\{M(-m-2,p_{1}),M(-m-1,p_{2})\}&=& \max_{1\leq j\leq m} M(j-m-3,p_{j})\\
&&\\
&\leq & \beta_{1}.\\
\end{array}$$

We claim that $ M(-m-2,p_{1})>M(-m-1,p_{2})$. Indeed, let
\begin{align}
\Theta &=M(-m-2,p_{1})-M(-m-1,p_{2}) \nonumber \\
&=\dfrac{N+m(m+2)p_{1}-(m+1)^{2}p_{2}}{m(m+1)}\label{eq3}
\end{align}
 We consider two subcases.
\begin{enumerate}
  \item If $m\geq4$, then as $p_{k}\geq k+1$ for each $k\geq1$, we get $$N=p_{m}p_{m-1}\ldots p_{2}p_{1}\geq 2p_{m}p_{m-1}p_{2}\geq2(m+1)mp_{2}.$$
 It follows by  $\eqref{eq3}$, that

 $$\Theta>\dfrac{2(m+1)mp_{2}-(m+1)^{2}p_{2}}{m(m+1)}=\dfrac{(m-1)p_{2}}{m}>0.$$

  \item   If $m=3$. Then $$\Theta=\dfrac{p_{1}p_{2}p_{3}+15p_{1}-16p_{2}}{12}.$$
 This implies that
\begin{itemize}

\item If $p_{1}\geq3$ or $p_{3}\geq11$, then $p_{1}p_{3}\geq21$. So,

 $\Theta \geq\dfrac{5p_{2}+15p_{1}}{12}>0$.

\item If $(p_{1},p_{3})=(2,5)$ so that $p_{2}=3$, then $\Theta=1$.

\item If $(p_{1},p_{3})=(2,7)$ then $p_{2}\leq5$, and $$\Theta=\dfrac{14p_{2}-16p_{2}+30}{12}=\dfrac{-p_{2}+15}{6}>0.$$
\end{itemize}
\end{enumerate}
Therefore, in all cases  we obtain $\Theta>0$, which implies that $ M(-m-2,p_{1})>M(-m-1,p_{2})$.
Consequently, \\ $M(-m-2,p_{1})=\max\{ M(-m-2,p_{1}),M(-m-1,p_{2})\}\leq \beta_{1}.$
\end{enumerate}
 Finally, combining the two cases, we conclude that $$M(-m-2,p_{1})\leq \alpha\leq \min(M(m-1,p_{m-1}),M(m,p_{m})).$$
\end{proof}

\begin{example}\rm
By this example we show that the two values $M(m-1,p_{m-1})$ and $ M(m,p_{m})$ in the upper bound of the inequality of Theorem ~\ref{encad} can be reached.
\begin{itemize}
\item If $N=2*5$, then $m=2$ and we have
\begin{align*}
\min\{ M(m-1,p_{m-1}),M(m,p_{m})\}&=\min\{ M(1,p_{1}),M(2,p_{2})\} \\
&=M(1,p_{1})=6.
\end{align*}
\item If $N=3*5$, then $m=2$ and we have
\begin{align*}
\min\{ M(m-1,p_{m-1}),M(m,p_{m})\}&=\min\{ M(1,p_{1}),M(2,p_{2})\} \\
&=M(2,p_{2})=\dfrac{25}{3}.
\end{align*}
\end{itemize}
\end{example}

The next result gives a necessary and a sufficient condition for the upper  bound of the inequality in Theorem~\ref{encad} to be attained.
\begin{theorem}\label{attein}
The following assertions are equivalent.
\begin{enumerate}
  \item   $M(j,p_{j})\in\mathbb{Q}$-$\mathcal{KS}(N)$ for some $j\in\{1\ldots m\}.$

  \item  $N=2p_{2}$ $($i.e. $m=2$ and $p_{1}=2).$
\end{enumerate}

\end{theorem}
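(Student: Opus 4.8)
The plan is to prove the two implications separately, with the forward direction $(1)\Rightarrow(2)$ carrying the real content.

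For $(2)\Rightarrow(1)$ I would simply exhibit the base explicitly. When $N=2p_{2}$ one computes $M(1,p_{1})=\frac{N+p_{1}}{2}=p_{2}+1$, an integer, and checks the two Korselt divisibility conditions directly: at $p_{1}=2$ one has $2-(p_{2}+1)=-(p_{2}-1)$ dividing $N-(p_{2}+1)=p_{2}-1$, and at $p_{2}$ one has $p_{2}-(p_{2}+1)=-1$ dividing $p_{2}-1$. Since $p_{2}+1\notin\{0,N\}$, this shows $M(1,p_{1})\in\mathbb{Q}$-$\mathcal{KS}(N)$, so $(1)$ holds with $j=1$.

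For $(1)\Rightarrow(2)$, set $\gamma=M(j_{0},p_{j_{0}})$ for the index $j_{0}$ provided. First I record that $p_{j_{0}}<\gamma<N$, both from the formula $\gamma=p_{j_{0}}+\frac{N-p_{j_{0}}}{j_{0}+1}$ and from $p_{j_{0}}<N$. Since $\gamma\in\mathbb{Q}$-$\mathcal{KS}(N)$, for each prime divisor $p_{n}$ the Korselt condition is equivalent to $f_{n}:=\frac{N-\gamma}{\gamma-p_{n}}\in\mathbb{Z}$. For $1\leq n\leq j_{0}$ we have $p_{n}\leq p_{j_{0}}<\gamma<N$, so every such $f_{n}$ is a positive integer, and $(f_{n})$ is strictly increasing in $n$ because the numerator is fixed and positive while $\gamma-p_{n}$ decreases. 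A direct computation gives $f_{j_{0}}=j_{0}$; hence $1\leq f_{1}<f_{2}<\cdots<f_{j_{0}}=j_{0}$ is a chain of $j_{0}$ positive integers bounded above by $j_{0}$, which forces $f_{n}=n$ for all $n\leq j_{0}$. In particular $f_{1}=1$, i.e. $\gamma=\frac{N+p_{1}}{2}=M(1,p_{1})$. The crucial gain here is that the attained value is pinned to the integer $\frac{N+p_{1}}{2}$, which makes the remaining conditions transparent. Applying the Korselt condition at $p_{2}$ and writing $N-p_{1}=(N+p_{1}-2p_{2})+2(p_{2}-p_{1})$ converts it into $N+p_{1}-2p_{2}\mid 2(p_{2}-p_{1})$. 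Since $N\geq p_{1}p_{2}\geq 2p_{2}>2p_{2}-p_{1}$ the left side is positive, so divisibility forces $N+p_{1}-2p_{2}\leq 2(p_{2}-p_{1})$, that is $N\leq 4p_{2}-3p_{1}$. Combining with $N\geq p_{1}p_{2}$ (equality iff $m=2$) yields $p_{2}(4-p_{1})\geq 3p_{1}$, which rules out $p_{1}\geq 5$ at once, eliminates $p_{1}=3$ (the bound forces $m=2$, and then $p_{2}+3\mid 12$ with $p_{2}\geq 11$, impossible), and forces $m=2$ when $p_{1}=2$ (since $m\geq 3$ would give $N\geq 2p_{2}p_{3}\geq 10p_{2}>4p_{2}-6$). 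Hence $p_{1}=2$ and $m=2$, i.e. $N=2p_{2}$.

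I expect the main obstacle to be the forcing step $f_{1}=1$: one must observe that the monotone chain of integer quotients $f_{1}<\cdots<f_{j_{0}}=j_{0}$ leaves no possibility other than $f_{n}=n$, which is exactly what reduces the attained base to $\frac{N+p_{1}}{2}$. Everything downstream is the elementary size-versus-divisibility estimate above, where the only real care needed is to verify the positivity of $N+p_{1}-2p_{i}$ before invoking that a positive divisor cannot exceed its (positive) multiple.
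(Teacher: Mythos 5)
Your proposal is correct, and its hard direction $(1)\Rightarrow(2)$ takes a genuinely different route from the paper's. The paper works from Proposition~\ref{carc0}: for each $k\neq j$ it writes $M(j,p_j)=M(l_k,p_k)$, extracts the relation $(l_k-j)N=l_k(j+1)p_k-j(l_k+1)p_j$, and then splits into the cases $k>j$ and $k<j$, in each case using that the prime $p_k$ (resp.\ $p_j$) must divide $l_k+1$ (resp.\ $l_k$) to get $l_k+1\geq p_k$ (resp.\ $l_k\geq p_j$); a sign analysis of the relation then squeezes $N\leq (j+1)p_k\leq p_jp_k\leq N$, forcing $N=p_jp_k$ and $p_j=j+1$, hence $j=1$, $p_1=2$, $m=2$. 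You instead run a monotonicity argument: the integer quotients $f_n=\frac{N-\gamma}{\gamma-p_n}$ for $n\leq j_0$ form a strictly increasing chain of positive integers ending at $f_{j_0}=j_0$, which pins $f_1=1$ and hence $\gamma=M(1,p_1)=\frac{N+p_1}{2}$; after that, a single divisibility-versus-size estimate at $p_2$ (namely $N+p_1-2p_2\mid 2(p_2-p_1)$ with both sides positive, so $N\leq 4p_2-3p_1$) eliminates $p_1\geq 5$, then $p_1=3$, then $m\geq 3$, leaving $N=2p_2$. Your chain device is essentially the same monotone-quotient technique the paper itself uses to prove Proposition~\ref{carc1}, so your route is stylistically coherent with the rest of the paper while avoiding its modular divisibility tricks and the two-case split on $k$ versus $j$; it also yields the sharper intermediate fact $f_n=n$ for all $n\leq j_0$, identifying the attained base explicitly as $(N+p_1)/2$. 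The paper's route, in exchange, treats all indices symmetrically and never needs the reduction to $j=1$. Two small points you should make explicit in a final write-up: the quotients $f_n$ are well defined because $\gamma\neq p_n$ (for $n\leq j_0$ this follows from $p_n<\gamma$, and at $p_2$ from the positivity of $N+p_1-2p_2$, which you do verify), and $\frac{N+p_1}{2}$ is an integer because $N\equiv p_1\pmod 2$.
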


\begin{proof}
$(2)\Rightarrow (1)$

Suppose that $N=2p_{2}$ $($i.e. $m=2$ and $p_{1}=2)$. Let show that  $M(1,p_{1})\in\mathbb{Q}$-$\mathcal{KS}(N)$.

 Putting $\alpha=M(1,p_{1})=\dfrac{N+2}{2}=p_{2}+1$,  we get
  $2-\alpha =1- p_{2} $ divides $ N-\alpha=p_{2}-1$ and  clearly
  $p_{2}-\alpha= p_{2}-(p_{2}+1) = 1$ divides $N-\alpha$. Thus, $N$ is a $K_{\alpha}$-number and consequently $M(1,p_{1})\in\mathbb{Q}$-$\mathcal{KS}(N)$.

  \medskip

  $(1)\Rightarrow (2)$

Assume that $\alpha=M(j,p_{j})\in\mathbb{Q}$-$\mathcal{KS}(N)$. Then, by Proposition~\ref{carc0} and for each $k=1\ldots m$ such that $k\neq j$,  there exists an integer $l_{k} \in \mathbb{N}-\{j\}$ such that $\alpha=M(l_{k},p_{k})$. Therefore, by the equality $$\alpha=\dfrac{N+jp_{j}}{j+1}=\dfrac{N+l_{k}p_{k}}{l_{k}+1},$$ we get
\begin{equation}\label{eq4}(l_{k}-j)N=l_{k}(j+1)p_{k}-j(l_{k}+1)p_{j},\end{equation}
and so

\begin{equation}\label{eq5} N=(j+1)p_{k}+\dfrac{j((j+1)p_{k}-(l_{k}+1)p_{j})}{l_{k}-j}.\end{equation}

We consider two cases:

\begin{enumerate}
  \item  If $k>j$, then  since $p_{k}$ divides $j(l_{k}+1)$ and $j<k<p_{k}$, it yields by  $\eqref{eq4}$, that $p_{k}$ divides $l_{k}+1$.
    Hence $j+1\leq p_{j}< p_{k}\leq l_{k}+1$,  so that  $l_{k}-j>0 $ and  $(j+1)p_{k}-(l_{k}+1)p_{j}\leq0$. It follows by $\eqref{eq5}$, that   $N-(j+1)p_{k}\leq0$, that is $N\leq (j+1)p_{k}\leq p_{j} p_{k}$.

     But, as  $N\geq p_{j} p_{k}$, it follows that $N= (j+1)p_{k}= p_{j} p_{k}$. Hence $p_{j}=j+1$ so that $j=1, p_{j}=2$  and $ n=2$. Thus, $N= 2p_{2}.$

\item Suppose that $k<j$. Then $j\geq2$ and $p_{k}< p_{j}$.

Since $p_{j}\geq j+1$ and by  $\eqref{eq4}$, we have $p_{j}$ divides $l_{k}(j+1)$, we consider two subcases:
\begin{itemize}

\item If $p_{j}=j+1$, then $j=2$ and so $p_{2}=3$, $k=1$ and $p_{1}=2$.

Hence $\eqref{eq4}$ becomes $(l_{1}-2)N=3l_{1}p_{1}-2(l_{1}+1)p_{2}=-2p_{2}$. Thus, $l_{1}=1$ and $N=2p_{2}$.

\item Suppose that $p_{j}>j+1$. Then $p_{j}$ divides $l_{k}$, therefore $ j+1< p_{j}\leq l_{k}$.
Since in addition $p_{k}< p_{j}$, it follows that $l_{k}-j>0$ and $(j+1)p_{k}-(l_{k}+1)p_{j}<0$.   Thus by $\eqref{eq5}$, we obtain $N-(j+1)p_{k}\leq0,$ which gives $N\leq (j+1)p_{k}\leq p_{j} p_{k}$.

     But, as in addition  $N\geq p_{j} p_{k}$, we get $N= (j+1)p_{k}= p_{j} p_{k}$.
     Therefore $p_{j}=j+1$ and so $j=1, p_{j}=2$  and $ n=2$.
     Consequently $N= 2p_{2}.$

\end{itemize}

\end{enumerate}
So, the required equivalence holds.
\end{proof}

\noindent \textbf{Acknowledgement.}

\bigskip

\end{document}